\documentclass[12pt, reqno]{amsart}
\usepackage{ amsmath,amsthm, amscd, amsfonts, amssymb, graphicx, color}
\usepackage[bookmarksnumbered, colorlinks, plainpages,linkcolor=blue,urlcolor=blue,citecolor=blue]{hyperref}
\textwidth 12 cm \textheight 18 cm
\setlength\arraycolsep{3pt}
\oddsidemargin 2.12cm \evensidemargin 1.8cm
\setcounter{page}{1}
\usepackage{setspace}
\usepackage{ulem}



\newtheorem{thm}{Theorem}[section]
\newtheorem{cor}[thm]{Corollary}
\newtheorem{lem}[thm]{Lemma}

\newtheorem{rem}[thm]{\bf{Remark}}

\newtheorem{exam}[thm]{Example}
\numberwithin{equation}{section}


\begin{document}

\title{generalized Cline's formula and Jacobson's lemma in a ring}

\author{Huanyin Chen}
\author{Marjan Sheibani}
\address{
School of Mathematics\\ Hangzhou Normal University\\ Hangzhou, China}
\email{<huanyinchenhz@163.com>}
\address{Farzanegan Campus, Semnan University, Semnan, Iran}
\email{<m.sheibani@semnan.ac.ir>}

\subjclass[2010]{16U99, 15A09, 47A11.} \keywords{Cline's formula; Jacobson's Lemma; Generalized Drazin inverse; Drazin inverse; ring.}

\begin{abstract}
We present new generalized Cline's formula and Jacobson's lemma for the g-Drazin inverse in a ring. These extend many known results, e.g.,
Chen and Abdolyousefi (Generalized Jacobson's Lemma in a Banach algebra,
Comm. Algebra, {\bf 49}(2021), 3263--3272), Yan and Zeng (The generalized inverses of the products of two elements in a ring, Turk. J. Math., {\bf 44}(2020), 1744--1756).\end{abstract}

\maketitle

\section{Introduction}

Let $R$ be an associative ring with an identity. The commutant of $a\in R$ is defined by $comm(a)=\{x\in
R~|~xa=ax\}$. The double commutant of $a\in R$ is defined by $comm^2(a)=\{x\in R~|~xy=yx~\mbox{for all}~y\in comm(a)\}$. An element $a\in R$ has g-Drazin inverse (i.e., generalized Drazin inverse) in case there exists $b\in R$ such that $b=bab, b\in comm^2(a), a-a^2b\in R^{qnil}.$ The preceding $b$ is unique if exists, we denote it by $a^{d}$. Here, $R^{qnil}=\{a\in R~|~1+ax\in R^{-1}~\mbox{for every}~x\in comm(a)\}$. An element $a\in R$ has Drazin inverse in case there exists $b\in R$ such that $b=bab, b\in comm^2(a), a-a^2b\in R^{nil}.$ The preceding $b$ is unique if exists, we denote it by $a^{D}$.

Let $a,b\in R$. As is well known, $ab$ is Drazin invertible if and only if so is $ba$. In this
case, $(ba)^D = b((ab)^D)^2a$. This equation is called Cline's formula. It plays an important
role in matrix theory and operator algebra.

In~\cite[Theorem 2.2]{LC}, Liao et al. generalized Cline's formula to the case of the generalized
Drazin invertibility. It was proved $ab\in R^d$ if and only if $ba\in R^d$. Generalized Cline's formula was also presented.

In ~\cite[Theorem 2.3]{L}. Cline's formula for a generalized Drazin inverse was extended to the case when $aba = aca$.

In ~\cite[Theorem 2.2]{CS2}, the authors presented a generalized Cline's formula for g-Drazin inverse under wider conditions $a(ba)^2=abaca$ $=acaba=(ac)^2a.$
Common spectral properties for bounded linear operators under the same conditions were studied by Zguitti (see~\cite{Z}).

If $A,B,C,D\in \mathcal{B}(X)$ satisfy $BAC=BDB$ and $CDB=CAC$, then $AC\in \mathcal{B}(X)^d$ if and only if $BD\in \mathcal{B}(X)^d$ (see~\cite[Theorem 3.4]{YZ2}).

In~\cite[Theorem 3.2]{M2}, Mosi\'c proved that $bd\in R^d$ if and only if $ac\in R^d$ under the conditions $bac=bdb$ and $cdb=cac$. The generalized Cline's formula was provided. This was also proved by Yan and Zeng in ~\cite[Theorem 2.2]{YF}.

Jacobson's Lemma states that $1-ab\in R^{-1}$ if and only if $1-ba\in R^{-1}$. It was extended to Drazin inverse in \cite{CH}. In~\cite[Theorem 2.3]{Z3}, Zhuang et al. proved that $1-ab\in R^d$ if and only if $1-ba\in R^d$ and a formula for the g-Drazin inverse of $1-ba$ in terms of the g-Drazin inverse of $1-ab$ was provided.

Corach et al. generalized Jacobson's lemma for g-Drazin inverse to the condition $aba=aca$ (see~\cite[Theorem 1]{C}).

In ~\cite[Theorem 2.2]{CS1}, the authors considered the wider conditions $a(ba)^2=abaca=acaba=(ac)^2a,$
and present an extension of Jacobson's lemma on the g-Drazin inverse for Banach algebras. This was extended from Banach algebras to rings by Ren and Jiang (see~\cite[Theorem 2.1]{R}).

If $A,B,C,D\in \mathcal{B}(X)$ satisfy $BAC=BDB$ and $CDB=CAC$, then $I-AC\in \mathcal{B}(X)^d$ if and only if $I-BD\in \mathcal{B}(X)^d$ (see~\cite[Theorem 4.4]{YZ2}).

In~\cite[Theorem 2.1]{M2}, Mosi\'c proved that $1-bd\in R^d$ if and only if $1-ac\in R^d$ under the conditions $bac=bdb$ and $cdb=cac$. This was also proved by Yan and Zeng (see~\cite[Theorem 3.1]{YF}).

In this paper, we establish new generalized Cline's formula and Jacobson's Lemma for g-Drazin inverse in a ring under the wider conditions. The preceding known theorems are thereby obtained as the special case of our results.

Throughout the paper, all rings are associative with an identity. We use $R^{-1}$ and $R^{nil}$ to denote the sets
of all units and all nilpotents of the ring $R$, respectively. $R^{D}$ and $R^{d}$ denote the sets of all Drazin and g-Drazin invertible elements in $R$. ${\Bbb C}$ stands for the field of all complex numbers.

\section{generalized Cline's Formula}

Let $a,b\in R$. As is well known, $ab\in R^{qnil}$ if and only if $ba\in R^{qnil}$ (see~\cite[Lemma 2.2]{L}). We begin with the following extension.

\begin{lem} Let $R$ be a ring, and let $a,b,c,d\in R$ satisfying
$$\begin{array}{c}
b(db)(ac)=b(db)^2;\\
c(ac)(db)=c(db)^2.
\end{array}$$ If $ac\in R^{qnil}$, then $bd\in R^{qnil}$.\end{lem}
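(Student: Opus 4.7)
The plan is to extract cleaner algebraic identities from the two hypotheses and then build an explicit inverse of $1-(bd)y$ for each $y\in comm(bd)$.

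First I would derive two key consequences of the hypotheses. Left-multiplying $b(db)(ac)=b(db)^2$ by $d$ gives $(db)^2(ac)=(db)^3$, and by induction on $n$ this yields $(db)^{n+2}=(db)^2(ac)^n$ for every $n\ge 0$. Combined with $b(db)^2=b(db)(ac)=bdb\cdot ac$, one obtains the workhorse identity $(bd)^{k+2}=bdb\cdot (ac)^k\cdot d$ for every $k\ge 0$. Symmetrically, from $c(ac)(db)=c(db)^2$, left-multiplication by $a$ gives $(ac)^2(db)=(ac)(db)^2$, and by induction $(ac)^{n+1}(db)=(ac)(db)^{n+1}$ for every $n\ge 0$.

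Fix $y\in comm(bd)$. Since $ac\in R^{qnil}$, the element $\eta:=(1-ac)^{-1}$ exists in $R$ and satisfies $ac\cdot \eta=\eta-1=\eta\cdot ac$. For the special case $y=1$, a direct calculation using $b(db)^2=bdb\cdot ac$ shows
\[
(1-bd)\bigl(1+bd+bdb\cdot \eta\cdot d\bigr)=1,
\]
so the natural guess for general $y$ is an ansatz of the form $S=1+(bd)y+bdb\cdot \Phi\cdot d$, where $\Phi\in R$ is an expression built from $\eta$ and powers of $y$, arranged so that the cross-terms cancel when one computes $(1-(bd)y)S$.

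The main obstacle is that $y$ need not commute with $ac$, $b$, $c$ or $d$ individually, only with $bd$. The verification of $(1-(bd)y)S=1$ rests on the reduction $(bd)y\cdot bdb=y\cdot b(db)^2=y\cdot bdb\cdot ac$, which injects a factor of $ac$ into the cross-term; this then telescopes against $ac\cdot \eta=\eta-1$. To manage the wandering $y$-factors, I plan to exploit the derived commutation $y^m\cdot bdb=bd\cdot y^m\cdot b$ (a direct consequence of $y\cdot bd=bd\cdot y$) and adjust the placement of $y$-powers in $\Phi$ accordingly. A mirror computation invoking the second identity $(ac)^{n+1}(db)=(ac)(db)^{n+1}$ will then yield an element $S'$ with $S'(1-(bd)y)=1$, so that $S$ is a two-sided inverse. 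As $y\in comm(bd)$ was arbitrary, this gives $bd\in R^{qnil}$.
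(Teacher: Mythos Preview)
Your computation for $y=1$ is correct and elegant, but the plan does not extend to a general $y\in comm(bd)$, and this is a genuine gap. Expanding your ansatz $S=1+(bd)y+bdb\,\Phi\,d$ gives
\[
(1-(bd)y)S \;=\; 1+bdb\,\Phi\,d-(bd)^2y^2-(bd)y\cdot bdb\,\Phi\,d,
\]
and the last term equals $(bd)^2\,y\,b\,\Phi\,d$ because $y\cdot bdb=bd\cdot y\cdot b$. The crucial reduction you used for $y=1$, namely $(bd)^2b=bdb\cdot ac$, now fails: the factor $y$ is trapped between $(bd)^2$ and $b$, and since $y$ need not commute with $b$ there is no way to pull it aside and convert $(bd)^2yb$ into $bdb\cdot(\text{something in }ac,y)$. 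Equivalently, the formal series $\sum_{k\ge 0}((bd)y)^{k+2}=\sum_{k\ge 0}bdb\,(ac)^k\,d\,y^{k+2}$ has the $y$--powers on the far right of each summand with $d$ in between, so it does not collapse to $bdb\cdot F(ac,y)\cdot d$ for any ring element $F(ac,y)$; your proposed adjustment of ``placement of $y$--powers in $\Phi$'' cannot repair this. The mirror argument for a left inverse runs into the same obstruction.

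The paper avoids constructing an explicit inverse altogether. Instead it shows, using \emph{both} hypotheses, that the element $w=acdbd\,x^{5}\,bdb$ commutes with $ac$ (the first hypothesis turns $bdb\cdot ac$ into $bdbdb$; after moving $x^{5}$ through the $bd$--blocks, the second hypothesis $cdbdb=cacdb$ pushes a factor of $ac$ back to the left). Quasinilpotency of $ac$ then gives $1-w\cdot ac\in R^{-1}$, Jacobson's lemma converts this to $1-(x\,bd)^{5}\in R^{-1}$, and the factorisation $1-(x\,bd)^{5}=(1-x\,bd)\bigl(1+x\,bd+\cdots+(x\,bd)^{4}\bigr)$ (with commuting factors) forces $1-x\,bd\in R^{-1}$. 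The key idea you are missing is precisely this: rather than inverting $1-(bd)y$ directly, pass to a high enough power so that both hypotheses can be invoked to produce an element in $comm(ac)$, and let Jacobson's lemma and a polynomial identity do the rest.
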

\begin{proof} Let $x\in comm(bd)$. Then we have
$$\begin{array}{lll}
(acdbdx^5bdb)ac&=&acdbdx^5(bdbac)\\
&=&acdbdx^5(bdbdb)\\
&=&a(cdbdb)dbdx^5b\\
&=&a(cacdb)dbdx^5b\\
&=&ac(acdbdx^5bdb)
\end{array}$$
We infer that $acdbdx^5bdb\in comm(ac)$; hence, $1-(acdbd)(x^5bdbac)$ $=1-(acdbdx^5bdb)ac\in R^{-1}$. By using Jacobson's Lemma (see~\cite[Lemma 2.1]{L}),
we have $$\begin{array}{lll}
1-x^5bdbdbdbdbd&=&1-x^5bd(bdbdb)dbd\\
&=&1-x^5bd(bdbac)dbd\\
&=&1-x^5(bdbdb)acdbd\\
&=&1-x^5(bdbacac)dbd\\
&=&1-(x^5bdbacac)dbd\\
&\in& R^{-1}.
\end{array}$$ Then
$$\begin{array}{ll}
&(1-xbd)(1+xbd+x^2bdbd+x^3bdbdbd+x^4bdbdbdbd)\\
=&(1+xbd+x^2bdbd+x^3bdbdbd+x^4bdbdbdbd)(1-xbd)\\
=&1-x^5bdbdbdbdbd\\
\in& R^{-1}.
\end{array}$$ Accordingly, $bd\in R^{qnil}$, as desired.\end{proof}

We come now to the main result of this paper.

\begin{thm} Let $R$ be a ring, and let $a,b,c,d\in R$ satisfying
$$\begin{array}{c}
b(ac)^2=b(ac)(db)=b(db)(ac)=b(db)^2;\\
c(ac)^2=c(ac)(db)=c(db)(ac)=c(db)^2.
\end{array}$$
Then $ac\in R^{d}$ if and only if $bd\in R^{d}$ and
$(bd)^{d}=b((ac)^{d})^2d$.\end{thm}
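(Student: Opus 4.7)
My plan is a direct verification of the three defining axioms of the g-Drazin inverse for the explicit candidate $f:=be^2d$, where $e:=(ac)^d$, after first distilling the hypothesis into a combinatorial ``word collapse'' identity. Left-multiplying the identities $c(ac)(db)=c(ac)^2$, $c(db)^2=c(ac)^2$, etc.\ by $a$ produces $(ac)^2(db)=(ac)(db)(ac)=(ac)(db)^2=(ac)^3$, and symmetrically for length-$3$ words starting with $db$. Induction on length then shows that any word $X_1X_2\cdots X_n$ of length $n\ge 3$ in $\{ac,db\}$ collapses to $X_1^n$. In particular $b(db)^n=b(ac)^n$ and $c(db)^n=c(ac)^n$ for $n\ge 2$, and consequently $(bd)^{n+1}=b(ac)^n d$ for all $n\ge 2$.

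Write $u=ac$ and $p=ue=eu$ for the spectral idempotent; recall the standard g-Drazin identities $ue^2=e^2u=e$, $e^3u^3=p$, and $pe^k=e^k$. Substituting $e^2=e^3u$ on the left,
\[
f(bd)f=be^2(db)^2e^2d=be^3\cdot u(db)^2\cdot e^2d=be^3(ac)^3e^2d=b(e^3u^3)e^2d=bpe^2d=be^2d=f,
\]
where the word collapse gives $u(db)^2=u^3$. The same kind of substitution yields $(bd)^2f=bpd=f(bd)^2$, so $bd-(bd)^2f=b(1-p)d$. To show this is in $R^{qnil}$, I would apply Lemma~2.1 to the reassignment $a'=a$, $c'=c(1-p)$, $b'=b$, $d'=(1-p)d$: because $1-p$ commutes with $u$, the primed quadruple inherits the Lemma's hypothesis, and $a'c'=u(1-p)=ac-(ac)^2e\in R^{qnil}$ by hypothesis, so Lemma~2.1 gives $b'd'=b(1-p)d\in R^{qnil}$.

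The hardest step is the double commutant $f\in comm^2(bd)$. The computation $(bd)f=b(db)e^2d=bed$ is immediate from $b(db)(ac)=bu^2$ and $u^2e^3=e$, but the reverse $f(bd)=be^2(db)d$ resists the same trick because $e^2$ now sits to the left of $(db)$ and no relation in the hypothesis moves $e$ past $b$ or $d$. I expect the resolution is to iterate the substitution $e^2=u^ke^{k+2}$ for large $k$ and invoke the $c$-half of the hypothesis (otherwise used only indirectly through the word collapse) to reduce $f(bd)$ to $bed$; once $(bd)f=f(bd)$ is established, the full double commutant follows from a standard uniqueness argument combined with the already-verified quasinilpotent property. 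Finally, the hypothesis is invariant under the swap $(ac)\leftrightarrow(db)$, so the forward direction applied to the swapped data yields $bd\in R^d\Rightarrow ac\in R^d$. The main obstacle is this $comm^2$ verification.
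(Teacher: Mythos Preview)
Your word-collapse machinery is sound and actually stronger than you give it credit for. The obstacle you flag for $f(bd)=(bd)f$ dissolves under your own tools: from $e^2=e^{k+2}(ac)^k$ and the collapse $(ac)^k(db)=(ac)^{k+1}$ (valid for $k\ge 2$) you get $e^2(db)=e^{k+2}(ac)^{k+1}=e$, hence $f(bd)=be^2(db)d=bed=(bd)f$. So commutativity with $bd$ is not the hard part.

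The genuine gap is the step from $f\in comm(bd)$ to $f\in comm^2(bd)$. Your ``standard uniqueness argument'' is not a triviality: it is the theorem that in a ring the $comm$-version of the g-Drazin axioms already forces the element into $comm^2$. If you are prepared to cite that result, your route closes; but the paper does not rely on it. Instead the paper takes an arbitrary $t\in comm(bd)$ and shows directly that $ft=tf$. The key trick is to manufacture an element of $comm(ac)$ out of $t$: one checks (using exactly your word-collapse identities) that $acdt(bd)^2bac\in comm(ac)$, hence it commutes with $h=(ac)^d$; writing $f=bh^7(ac)^5d$ and unwinding then gives $ft=tf$. This is a genuinely different mechanism from the uniqueness shortcut you propose.

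Your Lemma~2.1 application is also under-justified. Saying ``$1-p$ commutes with $u$'' is not enough, because the hypotheses of Lemma~2.1 involve $d'b'=(1-p)db$, and $1-p$ does not a priori commute with $db$. What makes it work is the identity $p(db)=(ac)p$ (equivalently $e(db)=p$), which \emph{does} follow from your word collapse; with this in hand the primed hypotheses reduce to $b(ac)^2(1-p)$ and $c(ac)^2(1-p)$ on both sides. The paper carries out essentially this verification explicitly, with the primed variables taken as $(pa,b,c,pd)$ rather than your $(a,b,c(1-p),(1-p)d)$, and checks all the required equalities by hand.

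In summary: your outline is basically right and the word-collapse viewpoint is a clean way to organise the computations, but you should (i) note that $e^2(db)=e$ already gives $f\in comm(bd)$, (ii) either cite the $comm\Rightarrow comm^2$ theorem or, as the paper does, verify $comm^2$ directly via the conjugation trick above, and (iii) justify the Lemma~2.1 step with the identity $p(db)=(ac)p$ rather than the insufficient ``$1-p$ commutes with $u$''.
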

\begin{proof} $\Longrightarrow $ Suppose that $ac$ has g-Drazin inverse and $(ac)^{d}$ $=h$. Let $e=bh^2d$ and $t\in comm(bd)$. We check that
$$\begin{array}{lll}
ac(acdtbdbdbac)&=&(acdbdbdbd)(tbac)\\
&=&acdtbdbdbdbac\\
&=&(acdtbdbdbac)ac.
\end{array}$$
Hence, $acdtbdbdbac\in comm(ac)$, and so $(acdtbdbdbac)h=h(acdtbdbdbac).$ We compute that
 $$\begin{array}{lll}
 et&=&bh^7(ac)^5dt\\
 &=&bh^7(acdtbdbdbac)d\\
 &=&b(acdtbdbdbac)h^7d\\
 &=&(bacdb)dtbdbach^7d\\
 &=&(bdbdb)dtbdbach^7d\\
 &=&tbdbdbdtbdbach^7d\\
  &=&tb(ac)^5h^7d\\
 &=&tbh^2d\\
 &=&te.
 \end{array}$$
 This implies that $e\in comm^2(bd)$.

We easily check that
 $$\begin{array}{lll}
 e(bd)e&=&bh^2d(bd)bh^2d=b(h^5acacac)dbdbh^2d\\
 &=&bh^5ac(db)^4h^2d=bh^5(ac)^5h^2d=bh^2d=e.
 \end{array}$$
Let $p=1-(ac)h$. Then $(pa)c=ac-achac=ac-(ac)^2h\in R^{qnil}$.
Moreover, we have
  $$\begin{array}{lll}
  bd-(bd)^2e&=&bd-bdbdbh^2d\\
  &=&bd-b(dbdbac)h^3d\\
  &=&bd-b(ac)^3h^3d\\
  &=&b(1-ach)d\\
  &=&b(pd).
  \end{array}$$
We directly compute that
$$\begin{array}{rll}
b(pac)(pdb)&=&b(ac)(db)-b(ac)^da(cacdb)\\
&=&b(ac)^2-b(ac)^3(ac)^d=bp(ac)^2=b(pac)^2,\\
b(pdb)(pac)&=&b[1-ac(ac)^d]dbacp=bdbacp-b(ac)^dacdbacp\\
&=&bdbacp-b(ac)^dp(ac)^3=bdbacp=b(pac)^2,\\
b(pdb)^2&=&bpdb[1-ac(ac)^d]db=bpdbdb-bpdbac(ac)^ddb\\
&=&bpdbdb-b[1-ac(ac)^d]dbac(ac)^ddb=bpdbdb\\
&=&b[1-ac(ac)^d]dbdb=bdbdb-b(ac)^dacdbdb\\
&=&b(ac)^2-b(ac)^dac(ac)^2=bp(ac)^2=b(pac)^2.
\end{array}$$
Furthermore, we have
$$\begin{array}{rll}
c(pac)(pdb)&=&c[1-ac(ac)^d]acdb=cacdb-c(ac)^da(cacdb)\\
&=&c(ac)^2-c(ac)^d(ac)^3=cp(ac)^2=c(pac)^2,\\
c(pdb)(pac)&=&c[1-ac(ac)^d]dbacp=cdbacp-c(ac)^dacdbacp\\
&=&c(ac)^2p-c(ac)^dp(ac)^3=c(pac)^2,\\
c(pac)(pdb)&=&c[1-ac(ac)^d]acdb=cacdb-c(ac)^da(cacdb)\\
&=&c(ac)^2-c(ac)^d(ac)^3=cp(ac)^2=c(pac)^2.
\end{array}$$
Therefore $$\begin{array}{c}
b(pac)^2=b(pac)(pdb)=b(pdb)(pac)=b(pdb)^2;\\
c(pac)^2=c(pac)(pdb)=c(pdb)(pac)=c(pdb)^2.
\end{array}$$
In light of Lemma 2.1, $b(pd)\in R^{qnil}$. Therefore $bd$ has g-Drazin inverse $e$. That is,
$e=bh^2a=(bd)^{d},$ as desired.

$\Longleftarrow $ This is obvious by the symmetry and Clin's formula.\end{proof}

In the case that $c=b$ and $d=a$, we recover the Cline's formula for g-Drazin inverse (see~\cite[Theorem 2.2]{LC}). Moreover, we derive

\begin{cor} (see~\cite[Theorem 2.2]{CS2}) Let $R$ be a ring, and let $a,b,c\in R$ satisfying $(ac)^2a=acaba=abaca=a(ba)^2.$ Then $ac\in R^{d}$ if and only if
$ba\in R^{d}$ and $(ba)^{d}=b((ac)^{d})^2a$.\end{cor}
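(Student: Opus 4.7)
The plan is to derive this corollary from Theorem 2.2 by specializing the Theorem's fourth variable $d$ to the corollary's $a$, keeping $a,b,c$ otherwise unchanged. Under this substitution one has $bd = ba$ and $db = ab$, so the Theorem's formula $(bd)^{d} = b((ac)^{d})^{2}d$ becomes $(ba)^{d} = b((ac)^{d})^{2}a$, and the equivalence $ac\in R^{d}\Leftrightarrow bd\in R^{d}$ specializes to the desired $ac\in R^{d}\Leftrightarrow ba\in R^{d}$. The entire corollary thereby reduces to verifying that the hypotheses of Theorem 2.2 hold under this specialization, namely
\begin{align*}
b(ac)^{2}&=b(ac)(ab)=b(ab)(ac)=b(ab)^{2},\\
c(ac)^{2}&=c(ac)(ab)=c(ab)(ac)=c(ab)^{2}.
\end{align*}

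To perform this verification I would read the corollary's hypothesis $(ac)^{2}a=acaba=abaca=a(ba)^{2}$ as the compact statement that the length-five product $aXaYa$ is independent of the choices $X,Y\in\{b,c\}$, and the Theorem's two required chains as the analogous independence of the length-five products $baXaY$ and $caXaY$. The natural plan is then to left-multiply each of the corollary's four equalities by $b$ and by $c$, producing length-six identities of the form $bacaca = bacaba = babaca = bababa$ and $cacaca = cacaba = cabaca = cababa$, and then rebracket these so as to read off the target length-five equalities in each chain.

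The step I expect to be the main obstacle is precisely this last rebracketing. Left-multiplication only delivers identities of the form $(\text{target})\cdot a=(\text{target}')\cdot a$ rather than the cleaner $\text{target}=\text{target}'$, and in a general ring one cannot cancel the trailing factor $a$. Overcoming this will likely require either (i) noting that in each of the six required pairwise equalities the two five-letter words can be rebracketed so as to share a naturally occurring suffix that is controlled by the length-six identity already in hand, or (ii) mirroring the proof of Theorem 2.2 directly under the corollary's weaker hypothesis, carrying along the trailing $a$ throughout the argument and absorbing it into the subsequent factors at each step. Once this bookkeeping is settled and both chains are established, Theorem 2.2 applies verbatim and yields both the equivalence and the formula $(ba)^{d}=b((ac)^{d})^{2}a$.
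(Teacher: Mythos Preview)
Your substitution $d:=a$ does not work: the hypotheses of Theorem~2.2 under this specialization, namely
\[
b(ac)^2=b(ac)(ab)=b(ab)(ac)=b(ab)^2,\qquad c(ac)^2=c(ac)(ab)=c(ab)(ac)=c(ab)^2,
\]
are \emph{not} consequences of the corollary's assumption. The corollary's relations all have the shape $aXaYa=aX'aY'a$ with $X,Y,X',Y'\in\{b,c\}$; in the free algebra on $a,b,c$ modulo these relations, every degree-five element of the relation ideal is a linear combination of words beginning with $a$. The words you need to identify, such as $bacac$ and $bacab$, begin with $b$ (respectively $c$), so $bacac-bacab$ does not lie in that ideal. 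Left-multiplying the hypothesis by $b$ only produces $bacac\cdot a=bacab\cdot a$, and the trailing $a$ cannot be cancelled in a general ring. Hence your option (i) fails outright, and option (ii) would mean reproving a variant of Theorem~2.2 from scratch rather than deducing the corollary from it.

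The paper sidesteps this by choosing a different specialization, $(a',b',c',d')=(c,a,a,b)$. Because $b'=c'=a$, the two chains in Theorem~2.2 collapse to the single chain
\[
a(ca)^2=a(ca)(ba)=a(ba)(ca)=a(ba)^2,
\]
which is exactly the corollary's hypothesis rebracketed. Theorem~2.2 then gives $ca\in R^{d}\Longleftrightarrow ab\in R^{d}$ together with $(ab)^{d}=a\big((ca)^{d}\big)^{2}b$, and two applications of the classical Cline formula ($ac\leftrightarrow ca$ and $ab\leftrightarrow ba$) convert this into the stated equivalence and the formula $(ba)^{d}=b\big((ac)^{d}\big)^{2}a$.
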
\begin{proof} By hypothesis, we have
$$a(caca)=a(caba)=a(baca)=a(baba).$$ In view of Theorem 2.2, $ca\in R^d$ if and only if $ab\in R^d$ and
$(ab)^d=a((ca)^{d})^2b$. By using Cline's formula (see~\cite[Theorem 2.2]{LC}), we prove that
$ac\in R^d$ if and only if $ba\in R^d$. In this case,
$$\begin{array}{lll}
(ba)^d&=&b[(ab)^d]^2a\\
&=&b[a((ca)^{d})^2b][a((ca)^{d})^2b]a\\
&=&ba((ca)^{d})^4c(acaca)((ca)^{d})^4c(acaca)\\
&=&ba[(ca)^d]^8(ca)^6\\
&=&ba[(ca)^d]^3ca\\
&=&ba[(ca)^d]^2ca[(ca)^d]^2ca\\
&=&b((ac)^{d})^2a,
\end{array}$$ as asserted.\end{proof}

\begin{cor}(see~\cite[Theorem 2.3]{L}) Let $R$ be a ring, and let $a,b,c\in R$ satisfying $aba=aca$. Then $ac\in R^d$ if and only if $ba\in R^{d}$ and $(ba)^{d}=b((ab)^{d})^2a$.\end{cor}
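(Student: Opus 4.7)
The plan is to reduce Corollary~2.4 to Corollary~2.3 combined with the classical Cline's formula for the g-Drazin inverse (see~\cite[Theorem 2.2]{LC}). Concretely, I would show that the single identity $aba=aca$ is already strong enough to force the four-fold identity $(ac)^2a=acaba=abaca=a(ba)^2$ that serves as the hypothesis of Corollary~2.3, and then extract the advertised formula for $(ba)^d$ by passing through the Cline relation between $ab$ and $ba$.

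First I would verify the four-fold identity by using $aba=aca$ as a substitution rule on length-three subwords: one has $(ac)^2 a=ac(aca)=ac(aba)=acaba$, $abaca=ab(aca)=ab(aba)=a(ba)^2$, and $acaba=(aca)ba=(aba)ba=a(ba)^2$. This places us squarely in the setting of Corollary~2.3, which then delivers the equivalence $ac\in R^d\Longleftrightarrow ba\in R^d$ with no further work.

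Next, to obtain the claimed expression $(ba)^d=b((ab)^d)^2 a$, I would invoke the standard Cline's formula from~\cite[Theorem 2.2]{LC}, which states that $ba\in R^d$ if and only if $ab\in R^d$, in which case $(ba)^d=b((ab)^d)^2 a$. Chaining this with the equivalence coming from Corollary~2.3 closes the loop $ac\in R^d\Longleftrightarrow ba\in R^d\Longleftrightarrow ab\in R^d$ and yields the displayed formula.

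I expect no genuine obstacle: the proof consists of pattern-matching the hypothesis of Corollary~2.3 against the single identity $aba=aca$, followed by a direct appeal to the basic Cline's formula. The only step requiring care is the first, where one must notice that under $aba=aca$ each of the four five-letter words $acaca$, $acaba$, $abaca$, $ababa$ collapses to $ababa$; once that collapse is observed, everything else is a citation.
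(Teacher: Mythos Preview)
Your proposal is correct and follows essentially the same route as the paper, whose entire proof is the single line ``This is clear by Corollary~2.3.'' You have simply made explicit the verification that $aba=aca$ forces the four-fold identity $(ac)^2a=acaba=abaca=a(ba)^2$, and you have been a bit more careful than the paper about the displayed formula: Corollary~2.3 literally yields $(ba)^d=b((ac)^d)^2a$, whereas the formula stated in Corollary~2.4 is $(ba)^d=b((ab)^d)^2a$, which is exactly the classical Cline's formula you invoke; so your extra appeal to \cite[Theorem~2.2]{LC} is indeed needed to match the stated form.
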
\begin{proof} This is clear by Corollary 2.3.\end{proof}

\begin{cor} (see~\cite[Theorem 3.2]{M2} and \cite[Theorem 2.2]{YF}) Let $R$ be a ring, and let $a,b,c,d\in R$ satisfying $$\begin{array}{c}
bac=bdb;\\
cac=cdb.
\end{array}$$ Then $ac\in R^{d}$ if and only if $bd\in R^{d}$ and
$(bd)^{d}=b((ac)^{d})^2d$.\end{cor}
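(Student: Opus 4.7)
The plan is to show that the hypotheses $bac = bdb$ and $cac = cdb$ force the full system of four-way equalities required as input to Theorem 2.2, and then invoke that theorem directly. No extra machinery should be needed.

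Concretely, I would first rewrite each of the products $b(ac)^2$, $b(ac)(db)$, $b(db)(ac)$, $b(db)^2$ by inserting the identities $bac = bdb$ at the leftmost occurrence. For example, $b(ac)(ac) = (bac)(ac) = (bdb)(ac) = b(db)(ac)$, and $b(ac)(db) = (bac)(db) = (bdb)(db) = b(db)^2$. The only case that requires a second application is $b(db)(ac) = bd(bac) = bd(bdb) = b(db)^2$. Stringing these together yields
\[
b(ac)^2 = b(db)(ac) = b(db)^2 = b(ac)(db),
\]
which is the first block of hypotheses in Theorem 2.2.

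Exactly the same pattern, now using $cac = cdb$, should produce the second block. Namely, $c(ac)^2 = (cac)(ac) = (cdb)(ac) = c(db)(ac)$; $c(ac)(db) = (cac)(db) = (cdb)(db) = c(db)^2$; and $c(db)(ac) = cd(bac) = cd(bdb) = c(db)^2$. Thus
\[
c(ac)^2 = c(db)(ac) = c(db)^2 = c(ac)(db).
\]

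With both sets of identities established, Theorem 2.2 applies verbatim, giving $ac\in R^{d}$ if and only if $bd\in R^{d}$, together with the explicit formula $(bd)^{d}=b((ac)^{d})^{2}d$. The only thing to watch is bookkeeping on which of the two hypotheses is used at each step; there is no serious obstacle, since a single substitution (or at worst two) suffices for every one of the eight equalities. This is exactly why the result is stated as a corollary rather than a theorem.
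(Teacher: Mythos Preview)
Your proposal is correct and matches the paper's own proof, which simply says ``This is obvious by Theorem 2.2.'' You have supplied exactly the verification the paper leaves implicit. One small point of bookkeeping: in the second block your step $c(db)(ac)=cd(bac)=cd(bdb)=c(db)^2$ actually uses $bac=bdb$, not $cac=cdb$, so the phrase ``now using $cac=cdb$'' is slightly off---both hypotheses are needed for the $c$-block---but the computations themselves are all correct.
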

\begin{proof} This is obvious by Theorem 2.2.\end{proof}

\section{Extensions in Banach algebras}

The aim of this section is to investigate the Cline's formula in a Banach algebra under more simpler conditions. We now derive

\begin{thm} Let $\mathcal{A}$ be a Banach algebra, and let $a,b,c,d\in \mathcal{A}$ satisfying $$\begin{array}{c}
b(ac)^2=b(db)^2;\\
c(ac)^2=c(db)^2.
\end{array}$$
Then $ac\in \mathcal{A}^{d}$ if and only if $bd\in \mathcal{A}^{d}$. In this case, $(bd)^{d}=b[(ac)^{d}]^2d$.\end{thm}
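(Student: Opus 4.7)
My plan is to observe that the difference $w:=(ac)^2-(db)^2$ is nilpotent of order two, so that for every $\lambda\neq 0$ the elements $\lambda^2-(ac)^2$ and $\lambda^2-(db)^2$ differ by multiplication by a unit in $\mathcal{A}$. This forces the non-zero parts of $\sigma((ac)^2)$ and $\sigma((db)^2)$ to agree, g-Drazin invertibility transfers from $ac$ to $bd$, and the desired formula falls out of the classical Cline's formula together with the holomorphic functional calculus.

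Writing $u=ac$ and $v=db$, I would first left-multiply $b(ac)^2=b(db)^2$ by $d$ and $c(ac)^2=c(db)^2$ by $a$ to obtain $(db)(ac)^2=(db)^3$ and $(ac)^3=(ac)(db)^2$; these rearrange to $u(u^2-v^2)=0=v(u^2-v^2)$. Hence $uw=vw=0$, so $u^2w=v^2w=0$, and therefore $w^2=u^2w-v^2w=0$.

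The key identity is then
\[
(\lambda^2-u^2)\Bigl(1+\tfrac{w}{\lambda^2}\Bigr)=(\lambda^2-u^2)+w=\lambda^2-v^2\qquad(\lambda\neq 0),
\]
valid since $u^2w=0$ and $w=u^2-v^2$. Because $w^2=0$, the factor $1+w/\lambda^2$ is a unit of $\mathcal{A}$ with inverse $1-w/\lambda^2$, so $\lambda^2-u^2\in\mathcal{A}^{-1}$ if and only if $\lambda^2-v^2\in\mathcal{A}^{-1}$, whence $\sigma(u^2)\setminus\{0\}=\sigma(v^2)\setminus\{0\}$. Combined with the spectral mapping $\sigma(x^2)=\sigma(x)^2$ and the characterisation of g-Drazin invertibility in a Banach algebra by $0$ being a non-accumulation point of the spectrum, this yields $ac\in\mathcal{A}^d\Leftrightarrow(ac)^2\in\mathcal{A}^d\Leftrightarrow(db)^2\in\mathcal{A}^d\Leftrightarrow db\in\mathcal{A}^d$; the last is equivalent to $bd\in\mathcal{A}^d$ by the classical Cline's formula for the g-Drazin inverse.

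For the formula, classical Cline gives $(bd)^d=b\,[(db)^d]^2\,d$, so it suffices to show $b\,[(ac)^d]^2=b\,[(db)^d]^2$, i.e.\ $b(u^2)^d=b(v^2)^d$. Iterating, $bu^{2k}=bv^{2k}$ for every $k\geq 1$ (using $v^{2k}w=0$), which gives $b(\lambda-u^2)^{-1}=b(\lambda-v^2)^{-1}$ in a neighbourhood of $\infty$ and, by analytic continuation, on the common complement of $\sigma(u^2)\cup\sigma(v^2)$. Integrating $(2\pi i)^{-1}\lambda^{-1}$ along a contour surrounding $\sigma(u^2)\setminus\{0\}$ then yields $b(u^2)^d=b(v^2)^d$ via the holomorphic functional calculus. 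The main obstacle I expect is pinning down the nilpotency $w^2=0$ cleanly from the four relations $uw=vw=u^2w=v^2w=0$; once this is in hand the spectral and analytic steps are routine.
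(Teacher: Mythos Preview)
Your approach is correct and genuinely different from the paper's. The paper proceeds algebraically: with the substitution $a'=aca$, $c'=c$, $b'=b$, $d'=dbd$, the hypotheses become $b'a'c'=b'd'b'$ and $c'a'c'=c'd'b'$, which are exactly the hypotheses of Corollary~2.5 (the Mosi\'c/Yan--Zeng condition). Since $a'c'=(ac)^2$ and $b'd'=(bd)^2$, Corollary~2.5 gives $(ac)^2\in\mathcal{A}^d\Leftrightarrow(bd)^2\in\mathcal{A}^d$, and the Banach-algebra fact $x\in\mathcal{A}^d\Leftrightarrow x^2\in\mathcal{A}^d$ (quoted from Mosi\'c) closes the equivalence; the formula is then read off from Corollary~2.5. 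Your route is spectral: you observe that $w=(ac)^2-(db)^2$ satisfies $bw=0$ and $uw=vw=0$ (hence $w^2=0$), deduce $\sigma(u^2)\setminus\{0\}=\sigma(v^2)\setminus\{0\}$ from the factorisation $(\lambda-u^2)(1+w/\lambda)=\lambda-v^2$, and recover the formula via functional calculus. Your argument is self-contained (it does not rely on Theorem~2.2) and yields the common-spectral statement $\sigma((ac)^2)\setminus\{0\}=\sigma((db)^2)\setminus\{0\}$ as a by-product; the paper's argument is more modular, reusing the ring-theoretic machinery already developed.

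One technical point to fix: the step ``by analytic continuation, on the common complement of $\sigma(u^2)\cup\sigma(v^2)$'' is not justified as stated, since that complement need not be connected (e.g.\ if the spectrum separates the plane), and the contour computing $(u^2)^d$ may have pieces in a bounded component of the resolvent set. But you already have what is needed to bypass this: from your identity $(\lambda-u^2)(1+w/\lambda)=\lambda-v^2$ together with $bw=0$ one gets directly
\[
b(\lambda-v^2)^{-1}=b\Bigl(1-\frac{w}{\lambda}\Bigr)(\lambda-u^2)^{-1}=b(\lambda-u^2)^{-1}
\]
for every nonzero $\lambda$ in the common resolvent set, with no continuation required; integrating then gives $b(u^2)^d=b(v^2)^d$. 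Incidentally, the ``main obstacle'' you flag is not one: $w^2=(u^2-v^2)w=u^2w-v^2w=0$ is immediate once $uw=vw=0$.
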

\begin{proof} $\Longrightarrow $ Let $aca=a^{'}, c=c^{'}, dbd=d^{'}$ and $b=b^{'}$. Then we have
$$\begin{array}{c}
b'a'c'=b(ac)^2=b(db)^2=b'd'b';\\
c'a'c'=c(ac)^2=c(db)^2=c'd'b'.
\end{array}$$ Since $ac\in \mathcal{A}^{d}$, it follows by~\cite[Corollary 2.2]{M1} that $a'c'=(ac)^2\in \mathcal{A}^d$. By virtue of Corollary 2.5,
$b'd'=(bd)^2\in \mathcal{A}^d$. According to ~\cite[Corollary 2.2]{M1}, $bd\in \mathcal{A}^d$. In this case, we have
$$\begin{array}{lll}
(bd)^{d}&=&[(bd)^2]^{d}bd\\
&=&(b'd')^dbd=b'[(a'c')^d]^2d'bd\\
&=&b[(ac)^d]^4(db)^2d\\
&=&b[(ac)^d]^5ac(db)^2d\\
&=&b[(ac)^d]^5ac(ac)^2d\\
&=&b[(ac)^d]^2d,
\end{array}$$ as desired.

$\Longleftarrow $ This is obvious by the symmetry and Cline's formula.\end{proof}

\begin{cor} Let $\mathcal{A}$ be a Banach algebra, and let $a,b,c\in \mathcal{A}$ satisfying $a(ca)^2=(ab)^2a.$
Then $ac\in \mathcal{A}^{d}$ if and only if $ba\in \mathcal{A}^{d}$. In this case, $(ba)^{d}=b[(ac)^{d}]^2a$.\end{cor}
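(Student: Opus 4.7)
The plan is to reduce this corollary to Theorem 3.1 via a relabeling and then transport the conclusion to the pair $(ac, ba)$ using the standard Cline's formula. First, I would apply Theorem 3.1 with the substitution $(a, b, c, d) \mapsto (c, a, a, b)$, under which $ac$ and $bd$ in Theorem 3.1 become $ca$ and $ab$ respectively. Because both $B$ and $C$ in Theorem 3.1 are now equal to $a$, the two hypotheses collapse to the single equation $a(ca)^2 = a(ba)^2$; since $(ab)^2 a = a(ba)^2$, this is exactly our assumption $a(ca)^2 = (ab)^2 a$. Theorem 3.1 therefore yields $ca \in \mathcal{A}^d$ if and only if $ab \in \mathcal{A}^d$, with $(ab)^d = a((ca)^d)^2 b$.

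Next, I would invoke Cline's formula for the g-Drazin inverse (Theorem 2.2 with $c=b$, $d=a$) in both directions: $ac \in \mathcal{A}^d \Leftrightarrow ca \in \mathcal{A}^d$ with $(ca)^d = c((ac)^d)^2 a$, and $ab \in \mathcal{A}^d \Leftrightarrow ba \in \mathcal{A}^d$ with $(ba)^d = b((ab)^d)^2 a$. Chaining with the previous step gives the equivalence $ac \in \mathcal{A}^d \Leftrightarrow ba \in \mathcal{A}^d$.

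For the explicit formula I would write $h = (ac)^d$ and telescope. The identity $(ac)((ac)^d)^2 = (ac)^d$ together with $(ca)^d = ch^2 a$ yields $a(ca)^d = ha$, so by induction $a((ca)^d)^n = h^n a$; in particular $(ab)^d$ simplifies to $h^2 ab$. The commutativity $(ab)(ab)^d = (ab)^d(ab)$ rewrites as $ab h^2 ab = h^2 (ab)^2$, whence $((ab)^d)^2 = (h^2 ab)(h^2 ab) = h^4 (ab)^2$. Substituting into $(ba)^d = b((ab)^d)^2 a$ and using the hypothesis $(ab)^2 a = (ac)^2 a$ gives $(ba)^d = b h^4 (ac)^2 a$. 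Finally, since $h^2(ac)^2$ equals the spectral idempotent $p = (ac)(ac)^d$ and $h p = h$, the factor $h^4(ac)^2$ collapses to $h^2$, so $(ba)^d = b h^2 a$, as required.

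The main technical obstacle I anticipate is this last telescoping. A naive expansion of $(ba)^d = b[a((ca)^d)^2 b]^2 a$ produces the alternating product $ba((ca)^d)^2 ba((ca)^d)^2 ba$, which under the stronger hypothesis of Corollary 2.3 (where $acaba = acaca$ is available) was collapsed by exactly those substitutions. Here only $acaca = ababa$ is given, so the crucial move is to first absorb the Cline identity to rewrite $(ab)^d$ as $h^2 ab$, after which the hypothesis $(ab)^2 a = (ac)^2 a$ and the idempotent identity $h^2(ac)^2 = p$ close the computation cleanly.
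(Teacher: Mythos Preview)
Your proposal is correct and follows essentially the same route as the paper: pass from $ac$ to $ca$ by Cline's formula, then from $ca$ to $ab$ by Theorem~3.1 under the substitution $(a,b,c,d)\mapsto(c,a,a,b)$, and finally from $ab$ to $ba$ by Cline again. Your derivation of the explicit formula is in fact more careful than the paper's, which compresses the simplification $b\big[a((ca)^d)^2b\big]^2a=b[(ac)^d]^2a$ into a single displayed line; your use of the identity $a(ca)^d=(ac)^d a$ and the idempotent relation $((ac)^d)^2(ac)^2=(ac)(ac)^d$ makes that step transparent.
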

\begin{proof} Since $ac\in \mathcal{A}^d$, by Cline's formula, $ca\in \mathcal{A}^d$. As $a(ca)^2=(ab)^2a$, by using Theorem 3.1,
$ab\in \mathcal{A}^d$. By using Cline's formula again, $ba\in \mathcal{A}^{d}$. In this case,
$$\begin{array}{lll}
(ba)^{d}&=&b\big[(ab)^d]^2a\\
&=&b[a((ca)^d)^2c]^2a\\
&=&b[(ac)^d]^2a.
\end{array}$$ This completes the proof.\end{proof}

\begin{cor} Let $R$ be a ring, and let $a,b,c,d\in R$ satisfying
$$\begin{array}{c}
b(ac)^2=b(db)^2;\\
c(ac)^2=c(db)^2.
\end{array}$$
Then $ac\in R^{D}$ if and only if $bd\in R^{D}$ and
$(bd)^{D}=b((ac)^{D})^2d$.\end{cor}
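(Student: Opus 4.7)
The plan is to mimic the proof of Theorem 3.1 almost verbatim, substituting Drazin invertibility for g-Drazin invertibility at every step. So I would begin by setting $a' = aca$, $b' = b$, $c' = c$, $d' = dbd$ and verifying directly from the two displayed identities in the hypothesis that
\[
b'a'c' = b(ac)^2 = b(db)^2 = b'd'b', \qquad c'a'c' = c(ac)^2 = c(db)^2 = c'd'b',
\]
so that the quadruple $(a',b',c',d')$ satisfies the hypothesis of Corollary~2.5 (in its Drazin form; see below).

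Next I would invoke the standard fact that, for an arbitrary ring, $x\in R^{D}$ if and only if $x^{2}\in R^{D}$, together with the explicit relation $x^{D}=(x^{2})^{D}x$. Combined with a Drazin version of Corollary~2.5 applied to $(a',b',c',d')$, this would give the chain
\[
ac\in R^{D} \iff (ac)^{2}=a'c'\in R^{D} \iff b'd'=(bd)^{2}\in R^{D} \iff bd\in R^{D}.
\]
For the explicit formula, I would then copy the computation at the end of the proof of Theorem~3.1: starting from $(bd)^{D} = \bigl((bd)^{2}\bigr)^{D}(bd) = (b'd')^{D}(bd)$, substitute $(b'd')^{D}=b'\bigl((a'c')^{D}\bigr)^{2}d' = b\bigl(((ac)^{2})^{D}\bigr)^{2}(dbd) = b\bigl((ac)^{D}\bigr)^{4}(db)^{2}d$, and cancel powers of $(ac)^{D}$ against $(ac)^{2}$ (using that $(ac)^{D}$ commutes with $ac$) until the desired $b\bigl((ac)^{D}\bigr)^{2}d$ emerges.

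The only real obstacle is that Corollary~2.5 is stated for the g-Drazin inverse, so I have to justify the Drazin analogue: if $bac=bdb$ and $cac=cdb$, then $ac\in R^{D}$ iff $bd\in R^{D}$, with the same formula $(bd)^{D}=b((ac)^{D})^{2}d$. I would argue that the proof of Theorem~2.2 goes through line by line once Lemma~2.1 is replaced by its nilpotent counterpart. The latter is in fact easier than the qnil version: from $bac=bdb$ one gets $b(ac)^{k}=b(db)^{k}$ by a one-line induction, so $(ac)^{n}=0$ forces $(bd)^{n+1}=b(db)^{n}d=b(ac)^{n}d=0$, and hence $bd\in R^{nil}$ whenever $ac\in R^{nil}$. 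With this substitution the rest of the construction of $e$ in the proof of Theorem~2.2 (the idempotent $p=1-ac(ac)^{D}$, the element $bh^{2}d$, and the verifications that it lies in $comm^{2}(bd)$ and satisfies $e(bd)e=e$ and $bd-(bd)^{2}e\in R^{nil}$) produces the Drazin inverse of $bd$, completing the argument.
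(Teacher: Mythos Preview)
Your route is genuinely different from the paper's. The paper does not rebuild the argument in the Drazin category: it invokes Theorem~3.1 to obtain $bd\in R^{d}$ with $(bd)^{d}=b[(ac)^{D}]^{2}d$, and then upgrades this to a Drazin inverse by the direct calculation
\[
\bigl[bd-(bd)^{2}(bd)^{d}\bigr](bd)^{2}=b\bigl[ac-(ac)^{2}(ac)^{D}\bigr](ac)d,
\]
so that the cube of $bd-(bd)^{2}(bd)^{d}$ already contains the nilpotent factor $ac-(ac)^{2}(ac)^{D}$, and an induction finishes. Your plan instead stays entirely inside $R^{D}$ via $x\in R^{D}\iff x^{2}\in R^{D}$ together with a Drazin analogue of Corollary~2.5; this has the merit of not leaning on Theorem~3.1, whose proof as written uses a Banach-algebra fact.

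There is, however, a real gap in your justification of the Drazin analogue of Corollary~2.5. You propose to rerun the proof of Theorem~2.2 with a nilpotent substitute for Lemma~2.1, and you prove that substitute under the hypothesis $bac=bdb$. But in the proof of Theorem~2.2 the lemma is not applied to the original quadruple: it is applied to $(pa,b,c,pd)$ with $p=1-ac(ac)^{D}$, and that quadruple is only shown to satisfy the eight identities of Theorem~2.2, \emph{not} the stronger relation $b(pa)c=b(pd)b$. Indeed
\[
b(pa)c-b(pd)b=bac\,(ac)^{D}(db-ac),
\]
which has no reason to vanish, so your one-line induction $b(ac)^{k}=b(db)^{k}$ is unavailable at the very point where you need it. The repair is easy once noticed: under the full eight identities one has $b(db)^{2}=b(ac)^{2}$ and $(ac)^{2}(db)=(ac)^{3}$, whence $(bd)^{3}=b(ac)^{2}d$ and, by induction, $(bd)^{3k}=b(ac)^{3k-1}d$. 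Thus nilpotency of $(pa)c$ forces nilpotency of $b(pd)$, and with this corrected nilpotent lemma the rest of your plan goes through.
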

\begin{proof}  Since $ac\in R^D$, we have $ac\in R^d$. In view of Theorem 3.1,
$bd\in R^d$ and $(bd)^d=b[(ac)^D]^2d$. We check that
$$\begin{array}{lll}
bd-(bd)^2(bd)^d&=&bd-(bd)^2b[(ac)^D]^2d\\
&=&bd-b(db)^2[(ac)^D]^2d\\
&=&bd-b(ac)^2[(ac)^D]^2d\\
&=&b[1-(ac)(ac)^D]d.
\end{array}$$ Then
$$\begin{array}{lll}
[bd-(bd)^2(bd)^d]bdbd&=&b[1-(ac)(ac)^D]dbdbd\\
&=&b(db)^2d-b(ac)^Dac(db)^2d\\
&=&b(ac)^2d-b(ac)^Dac(ac)^2d\\
&=&b[ac-(ac)^2(ac)^D]acd.\\
\end{array}$$ Hence,
$$\begin{array}{lll}
[bd-(bd)^2(bd)^d]^3&=&b[ac-(ac)^2(ac)^D]acd[1-(bd)(bd)^d].\\
\end{array}$$ As $ac-(ac)^2(ac)^D$ is nilpotent, by induction, we verify that $bd-(bd)^2(bd)^d$ is nilpotent.
Therefore $bd\in R^D$ and $(bd)^D=(bd)^d$, as asserted.\end{proof}

\begin{exam}\end{exam} Let $R=M_2({\Bbb C})$. Choose $$\begin{array}{c}
a=
\left(
\begin{array}{cc}
0&1\\
0&0
\end{array}
\right), b=\left(
\begin{array}{cc}
1&0\\
0&0
\end{array}
\right),\\
c=\left(
\begin{array}{cc}
1&0\\
1&1
\end{array}
\right)\in M_2({\Bbb C}).
\end{array}$$ Then $a(ca)^2=0=(ab)^2a$, while $aca=\left(
\begin{array}{cc}
0&1\\
0&0
\end{array}
\right)\neq 0=aba$. In this case, $(ac)^D=\left(
\begin{array}{cc}
0&1\\
0&1
\end{array}
\right)$ and $(ba)^D=0$.\\

\section{generalized Jacobson's lemma for g-Drazin inverse}

In this section, we investigate new extension of Jacobson's lemma for generalized Drazin inverse. We have the following new characterization.

\begin{thm} Let $R$ be a ring, and let $a,b,c,d\in R$ satisfying
$$\begin{array}{c}
b(ac)^2=b(ac)(db)=b(db)(ac)=b(db)^2;\\
c(ac)^2=c(ac)(db)=c(db)(ac)=c(db)^2.
\end{array}$$
If $\alpha=1-bd\in R^{d}$, then $\beta=1-ac\in R^{d}$ and
$\beta^{d}=\big[1-acd\alpha^{\pi}\big(1-\alpha\alpha^{\pi}(1+bd+bdbd)\big)^{-1}bac\big](1+ac+acac)+acd\alpha^dbac$.\end{thm}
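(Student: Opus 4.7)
My plan is to verify directly that the displayed element is the g-Drazin inverse of $\beta = 1 - ac$, by checking the three defining axioms in turn. The approach parallels the proof of Jacobson's lemma for g-Drazin inverses in~\cite{Z3}, suitably extended to absorb the four-fold hypothesis via Lemma~2.1 and the Cline-type identities that powered Theorem~2.2.

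I would first collect the algebraic consequences of the hypothesis that will be used throughout. Multiplying the first row of identities on the left by $a$ and the second on the left by $d$ produces the derived four-fold relations
\begin{align*}
(ac)^3 &= (ac)^2(db) = (ac)(db)(ac) = (ac)(db)^2,\\
(db)^3 &= (db)^2(ac) = (db)(ac)(db) = (db)(ac)^2.
\end{align*}
Moreover, combining $b(ac)^2 = b(db)(ac)$ with $b(ac)(db) = b(db)^2$ gives $b(ac-db)(ac) = b(ac-db)(db) = 0$, hence $b(ac-db)^2 = 0$; symmetrically $c(ac-db)^2 = 0$, and premultiplying by $a$ or $d$ yields $(ac-db)^3 = 0$. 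I would also verify here that $u := 1 - \alpha\alpha^{\pi}(1+bd+(bd)^2) \in R^{-1}$: because $\alpha^{\pi} \in comm^2(\alpha) \subset comm(bd)$, the element $1+bd+(bd)^2$ commutes with $\alpha\alpha^{\pi}$, and $\alpha\alpha^{\pi} \in R^{qnil}$ forces $u$ to be invertible. A useful byproduct of this computation is the identity $u\alpha^{\pi} = \alpha^{\pi}(bd)^3$.

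With these preparations I would verify the three axioms for $\beta^d$. For the double commutant property, a given $z \in comm(\beta)$ commutes with $ac$, and via $\alpha^d, \alpha^{\pi} \in comm^2(1-bd)$ together with the hypothesis identities one propagates this commutation through the intermediate products $acd\alpha^d bac$ and $acd\alpha^{\pi}u^{-1}bac$. For $\beta^d\beta\beta^d = \beta^d$, the key identity $\beta(1+ac+(ac)^2) = 1-(ac)^3 = 1-(ac)(db)^2$ collapses most terms, and the remaining cross terms are handled by $\alpha\alpha^d + \alpha^{\pi} = 1$ together with $u\alpha^{\pi} = \alpha^{\pi}(bd)^3$. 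The main obstacle is the third axiom, $\beta - \beta^2\beta^d \in R^{qnil}$: here the plan is to recast the remainder in the form $b'd'$ for a suitably chosen quadruple $(a', b', c', d')$ whose $a'c'$ is built from $\alpha\alpha^{\pi}$ times commuting factors, and then to invoke Lemma~2.1. Identifying the correct primed quadruple is the creative heart of the proof; it must be reverse-engineered so that its four Lemma~2.1 hypotheses reduce to the $(ac)$--$(db)$ identities established above, and so that quasinilpotency of $a'c'$ is immediate from $\alpha\alpha^{\pi} \in R^{qnil}$.
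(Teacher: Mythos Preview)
Your overall architecture---verify the three g-Drazin axioms directly for the displayed element $y$---is exactly the paper's strategy, and your preliminary identities (in particular $u\alpha^{\pi}=\alpha^{\pi}(bd)^3$ and $\beta(1+ac+(ac)^2)=1-(ac)^3$) are correct and are precisely what drives the paper's Step~1 ($y\beta y=y$) and Step~3 ($y\in comm^2(\beta)$).

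The genuine gap is your treatment of the quasinilpotent remainder. You propose to write $\beta-\beta y\beta$ as $b'd'$ for a new quadruple and invoke Lemma~2.1, but you do not produce the quadruple, and you concede that finding it is ``the creative heart of the proof.'' This is not how the paper proceeds, and it is not clear your route closes: even after the natural cycling $\beta-\beta y\beta=(acd)\cdot\big[\alpha(bd)^3\alpha^{\pi}u^{-2}bac\big]$ and the observation that the reversed product equals $\alpha\alpha^{\pi}\cdot(bd)^6u^{-2}$, one still needs that a quasinilpotent times a commuting factor is quasinilpotent, which in a general ring is \emph{not} automatic from the definition (an arbitrary $x\in comm(\alpha\alpha^{\pi}v)$ need not commute with $\alpha\alpha^{\pi}$). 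Setting up Lemma~2.1 to absorb this requires manufacturing $a',c'$ so that $a'c'\in R^{qnil}$ and the two Lemma~2.1 identities hold, and you have not indicated what these should be.

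The paper avoids this entirely. It first computes
\[
\beta-\beta y\beta \;=\; acd\,\alpha(bd)^3\alpha^{\pi}\,u^{-2}\,bac,
\]
then, for an arbitrary $z\in comm(\beta-\beta y\beta)$, uses the classical Jacobson lemma for units to reduce $1+z(\beta-\beta y\beta)\in R^{-1}$ to the invertibility of $1+\big(bac\,z\,acd(bd)^3u^{-2}\big)\cdot\alpha\alpha^{\pi}$, and shows \emph{by direct calculation with the hypothesis identities} that $bac\,z\,acd(bd)^3$ commutes with $\alpha\alpha^{\pi}$. Since $u^{-2}$ is a polynomial in $bd$ and $\alpha^{\pi}$, the whole coefficient lies in $comm(\alpha\alpha^{\pi})$, and quasinilpotency of $\alpha\alpha^{\pi}$ finishes the step. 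In short: no primed quadruple, no Lemma~2.1---just cycle once and verify one commutation relation. You should replace your Lemma~2.1 plan with this direct argument.
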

\begin{proof} $\Longrightarrow$ Let $p=\alpha^{\pi},x=\alpha^d$. Then $1-p\alpha (1+bd+bdbd)\in R^{-1}$. Let $${\scriptsize y=\big[1-acdp\big(1-p\alpha(1+bd+bdbd)\big)^{-1}bac\big](1+ac+acac)+acd\alpha^dbac.}$$ We shall prove that $\beta^d=y$.

Step 1. $y\beta y=y$. We see that
$$y\beta=1-(ac)^3-acdp[1-p\alpha (1+bd+bdbd)]^{-1}bac[1-(ac)^3]+acdxbac(1-ac).$$
We compute that
$${\scriptsize\begin{array}{ll}
&y\beta\\
=&1-[acacac-acdxbac(1-ac)]-acdp[1-p\alpha (1+bd+bdbd)]^{-1}bac[1-(ac)^3]\\
=&1-ac[dbac-dxbac(1-ac)]-acdp[1-p\alpha (1+bd+bdbd)]^{-1}[bac-bac(ac)^3\big]\\
=&1-ac[dbac-dx(bac-bacac)]-acdp[1-p\alpha (1+bd+bdbd)]^{-1}(bac-bacacdbac)\\
=&1-ac[dbac-dx(bac-bacac)]-acdp[1-p\alpha (1+bd+bdbd)]^{-1}(1-bacacd)bac\\
=&1-ac[dbac-dx(1-bd)bac]-acdp[1-p\alpha (1+bd+bdbd)]^{-1}[1-(bd)^3]bac\\
=&1-acdpbac-acdp[1-p\alpha (1+bd+bdbd)]^{-1}p\alpha (1+bd+bdbd)bac\\
=&1-acdp[1-p\alpha (1+bd+bdbd)]^{-1}\big[(1-p\alpha (1+bd+bdbd))+p\alpha (1+bd+bdbd)\big]bac\\
=&1-acdp[1-p\alpha(1+bd+bdbd)]^{-1}bac.\\
\end{array}}$$
Since $bacacdb=ba(cacdb)=bacdbdb=bdbdbdb=bd(bacac)=bdbacac$, we have $(bacacd)(bd)=(bd)(bacacd)$, and so $(bacacd)\alpha =\alpha (bacacd).$ Hence,
$(bacacd)x=x(bacacd),$ and then
$$\begin{array}{ll}
&acdp[1-p\alpha(1+bd+bdbd)]^{-1}bacacdxbac\\
=&acd[1-p\alpha(1+bd+bdbd)]^{-1}pxbacacdbac\\
=&0.
\end{array}$$ Therefore we have
$${\tiny\begin{array}{ll}
&y\beta y\\
=&y-acdp[1-p\alpha(1+bd+bdbd)]^{-1}bacy\\
=&y-acdp[1-p\alpha(1+bd+bdbd)]^{-1}bac(1+ac+acac)\\
+&acdp[1-p\alpha(1+bd+bdbd)]^{-1}bacacdp\big(1-p\alpha(1+bd+bdbd)\big)^{-1}bac(1+ac+acac)\\
=&y-acdp[1-p\alpha(1+bd+bdbd)]^{-1}(1+bd+bdbd)bac\\
+&acdp[1-p\alpha(1+bd+bdbd)]^{-1}bdbdbdp\big(1-p\alpha(1+bd+bdbd)\big)^{-1}(1+bd+bdbd)bac\\
=&y-acdp[1-p\alpha(1+bd+bdbd)]^{-1}(1+bd+bdbd)bac\\
+&acdp[1-p\alpha(1+bd+bdbd)]^{-2}(bd)^3(1+bd+bdbd)bac\\
=&y-acdp[1-p\alpha(1+bd+bdbd)]^{-2}\big[p-p\alpha(1+bd+bdbd)-p(bd)^3\big](1+bd+bdbd)bac\\
=&y.\\
\end{array}}$$

Step 2. $\beta-\beta y\beta\in R^{qnil}$.
By Step 1, $y=y\beta y$, and so $(1-y\beta)^2=1-y\beta$. Hence,
$$\begin{array}{lll}
\beta-\beta y\beta&=&\beta (1-y\beta )^2\\
&=&\beta acd(bd)^3p[1-p\alpha(1+bd+bdbd)]^{-2}bac\\
&=&(1-ac)acacacacdp[1-p\alpha(1+bd+bdbd)]^{-2}bac\\
&=&[acdbac-acacdbac]acdp[1-p\alpha(1+bd+bdbd)]^{-2}bac\\
&=&[acdbac-acdbdbac]acdp[1-p\alpha(1+bd+bdbd)]^{-2}bac\\
&=&acd\alpha bacacdp[1-p\alpha(1+bd+bdbd)]^{-2}bac\\
&=&acd\alpha bdbdbdp[1-p\alpha(1+bd+bdbd)]^{-2}bac.
\end{array}$$ Let $z\in comm(\beta -\beta y\beta)$. Then
$$\begin{array}{ll}
&zacd\alpha bdbdbdp[1-p\alpha(1+bd+bdbd)]^{-2}bac\\
=&acd\alpha bdbdbdp[1-p\alpha(1+bd+bdbd)]^{-2}bacz.
\end{array}$$
We will suffice to prove $1+acd\alpha bdbdbdp[1-p\alpha(1+bd+bdbd)]^{-2}bacz\in R^{-1}.$
Obviously, we check that $p=(bd)^3p[1-p\alpha (1+bd+bdbd)]^{-1}=(bd)^6p[1-p\alpha (1+bd+bdbd)]^{-2}.$
Hence, we get
$$\begin{array}{ll}
&(baczacdbdbdbd)\alpha p\\
=&bac[zacd\alpha bdbdbdp[1-p\alpha (1+bd+bdbd)]^{-2}](bd)^6\\
=&bac[zacd\alpha bdbdbdp[1-p\alpha (1+bd+bdbd)]^{-2}bac](db)^4d\\
=&bac[zacd\alpha bdbdbdp[1-p\alpha (1+bd+bdbd)]^{-2}bac]ac(db)^3d\\
=&bac[acd\alpha bdbdbdp[1-p\alpha (1+bd+bdbd)]^{-2}bacz]ac(db)^3d\\
=&bdbdbd\alpha bdbdbdp[1-p\alpha (1+bd+bdbd)]^{-2}baczac(db)^3d\\
=&\alpha (bd)^6p[1-p\alpha (1+bd+bdbd)]^{-2}baczac(db)^3d\\
=&\alpha p(baczacdbdbdbd).
\end{array}$$

Step 3. $y\in comm^2(\beta)$. Let $s\in comm(\beta)$. Then $s\beta=\beta s$, and so $s(ac)=(ac)s$.

Claim 1. $s(acdxbac)=(acdxbac)s.$ We easily check that
$$(bacsdbd)\alpha=bacsd\alpha bd=bacs\beta dbd=bac\beta sdbd =\alpha (bacsdbd).$$ Hence $(bacsdbd)x=x(bacsdbd)$, and then
$$\begin{array}{lll}
s(acdpbac)&=&sd(bd)^4p[1-p\alpha (1+bd)]^{-2}bac\\
&=&s(ac)^2dbdbdp[1-p\alpha (1+bd)]^{-2}bac\\
&=&d(bacsdbd)bdp[1-p\alpha (1+bd)]^{-2}bac\\
&=&dbdp[1-p\alpha (1+bd)]^{-2}(bacsdbd)bac\\
&=&dbdp[1-p\alpha (1+bd)]^{-2}bacs(ac)^3\\
&=&dbdp[1-p\alpha (1+bd)]^{-2}(bd)^3bacs\\
&=&d(bd)^4p[1-p\alpha (1+bd)]^{-2}bacs\\
&=&(acdpbac)s.
\end{array}$$ Since $sacdbac=s(ac)^3=(ac)^3s=acdbacs$, we have
$sacd\alpha xbac=acd\alpha xbacs,$ and so
$sacdxbac-sacdbdxbac=acdxbacs-acdbdxbacs.$

On the other hand, we have
$$\begin{array}{lll}
s(acdbdpbac)&=&sd(bd)^5p[1-p\alpha (1+bd)]^{-2}bac\\
&=&s(ac)^4dbdp[1-p\alpha (1+bd)]^{-2}bac\\
&=&dbdbd(bacsdbd)p[1-p\alpha (1+bd)]^{-2}bac\\
&=&dbdbdp[1-p\alpha (1+bd)]^{-2}(bacsdbd)bac\\
&=&dbdbdp[1-p\alpha (1+bd)]^{-2}bacs(ac)^3\\
&=&dbdbdp[1-p\alpha (1+bd)]^{-2}(bd)^3bacs\\
&=&dbd(bd)^4p[1-p\alpha (1+bd)]^{-2}bacs\\
&=&(acdbdpbac)s.
\end{array}$$ Since $sacdbdbac=s(ac)^4=(ac)^4s=acdbdbacs$, we have
$acdbd\alpha xbacs=sacdbd\alpha xbac.$ Then we have $$\begin{array}{lll}
acacdbd\alpha xbacs&=&ac(acdbd\alpha xbacs)\\
&=&ac(sacdbd\alpha xbac)\\
&=&sacacdbd\alpha xbac\\
&=&sacdbdbd\alpha xbac,
\end{array}$$
and so $acdbd(1+bd)\alpha xbacs=sacdbd(1+bd)\alpha xbac,$ and then
$acdbdxbacs-acdbd(bd)^2xbacs=sacdbdxbac-sacdbd(bd)^2xbac.$
One easily checks that
$$\begin{array}{lll}
acd(bd)^3xbacs&=&acdx(bd)^3bacs\\
&=&acdxb(ac)^4s\\
&=&acdx(bacsdbd)bac\\
&=&acd(bacsdbd)xbac\\
&=&(ac)^3sdbdxbac\\
&=&s(ac)^3dbdxbac\\
&=&sacd(bd)^3xbac.
\end{array}$$
Hence $acdbdxbacs=sacdbdxbac$, and so $s(acdxbac)=(acdxbac)s.$

Claim 2. $sacdp\big[1-p\alpha(1+bd+bdbd)\big]^{-1}bac(1+ac+acac)=acdp\big[1-p\alpha(1+bd+bdbd)\big]^{-1}bac(1+ac+acac)s$.
Set $t=acdp\big[1-p\alpha(1+bd+bdbd)\big]^{-1}bac(1+ac+acac).$ Then we check that
$$\begin{array}{lll}
st&=&sacdp\big[1-p\alpha(1+bd+bdbd)\big]^{-1}bac(1+ac+acac)\\
&=&sacd(bd)^3p\big[1-p\alpha(1+bd+bdbd)\big]^{-2}bac(1+ac+acac)\\
&=&acacacsdbdp\big[1-p\alpha(1+bd+bdbd)\big]^{-2}bac(1+ac+acac)\\
&=&acd(bacsdbd)p\big[1-p\alpha(1+bd+bdbd)\big]^{-2}bac(1+ac+acac)\\
&=&acdp\big[1-p\alpha(1+bd+bdbd)\big]^{-2}bacsdbdbac(1+ac+acac),\\
\end{array}$$
Also we have
$$\begin{array}{lll}
ts&=&acdp\big[1-p\alpha(1+bd+bdbd)\big]^{-1}bac(1+ac+acac)s\\
&=&acdp[1-p\alpha (1+bd+bdbd)]^{-2}(bd)^3bac(1+ac+acac)s\\
&=&acdp[1-p\alpha (1+bd+bdbd)]^{-2}bsacacacac(1+ac+acac)\\
&=&acdp[1-p\alpha (1+bd+bdbd)]^{-2}bsacdbdbac(1+ac+acac)\\
&=&acdp[1-p\alpha (1+bd+bdbd)]^{-2}bacsdbdbac(1+ac+acac)\\\\
\end{array}$$
Then $st=ts$; hence, $y\in comm^2(\beta)$. Therefore $y=\beta^d$, as desired.

$\Longleftarrow$ Since $1-ac\in R^d$, it follows by Jacobson's Lemma that $1-ca\in R^d$. Applying the preceding discussion, we obtain that $1-bd\in R^d$, as desired.
\end{proof}

\begin{cor} (~\cite[Theorem 2.1]{M2} and ~\cite[Theorem 2.2]{YF}) Let $R$ be a ring, and let $a,b,c,d\in R$ satisfying $bac=bdb, cac=cdb.$ Then $\alpha=1-bd\in R^{d}$ if and only if
$\beta=1-ac\in R^d$. In this case,
$$\beta^{d}=\big[1-acd\alpha^{\pi}\big(1-\alpha\alpha^{\pi}(1+bd+bdbd)\big)^{-1}bac\big](1+ac+acac)+acd\alpha^dbac.$$\end{cor}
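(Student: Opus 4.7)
The plan is to verify directly that the explicit $y$ in the statement is the g-Drazin inverse of $\beta=1-ac$ by checking the three defining conditions: $y\beta y=y$, $\beta-\beta y\beta\in R^{qnil}$, and $y\in comm^2(\beta)$. Set $p=\alpha^\pi$ and $x=\alpha^d$. Because $p\alpha=\alpha\alpha^\pi\in R^{qnil}$ commutes with $bd$, the element $p\alpha(1+bd+(bd)^2)$ lies in $R^{qnil}$ and the resolvent $\bigl(1-p\alpha(1+bd+(bd)^2)\bigr)^{-1}$ is well defined, so $y$ makes sense from the outset.

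The engine of the argument is the elementary polynomial identity $(bd)^3=1-\alpha(1+bd+(bd)^2)$, obtained by substituting $bd=1-\alpha$ into $(1-\alpha)^3$. Combined with the idempotency of $p$, this yields the cornerstone relation
\[
p \;=\; (bd)^3\,p\,\bigl(1-p\alpha(1+bd+(bd)^2)\bigr)^{-1} \;=\; (bd)^6\,p\,\bigl(1-p\alpha(1+bd+(bd)^2)\bigr)^{-2}.
\]
The hypothesis supplies the chains $bacac=bacdb=bdbac=bdbdb$ and $cacac=cacdb=cdbac=cdbdb$, together with their multiplied forms $(ac)^3=acdbdb=acdbac$. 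These allow one to migrate cubes of $bd$ freely through $bac$ and $acd$ factors, and essentially every step of the verification reduces to shuffling such cubes through the resolvent.

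For $y\beta y=y$, I would expand $y\beta$ into a single-resolvent expression and multiply on the right by $y$, applying the cornerstone relation to collapse the product back to $y$. The condition $\beta-\beta y\beta\in R^{qnil}$ uses the automatic consequence $(y\beta)^2=y\beta$ of the first property to write $\beta-\beta y\beta=\beta(1-y\beta)^2$; this simplifies to a product of the form $acd\alpha\cdot E\cdot bac$ with $E$ commuting with $bd$, and quasinilpotence is then confirmed by showing, for any $z\in comm(\beta-\beta y\beta)$, that $baczacd(bd)^3$ commutes with $\alpha p$, so that Jacobson's lemma together with $\alpha p\in R^{qnil}$ gives $1+z(\beta-\beta y\beta)\in R^{-1}$.

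The main obstacle is the double commutant condition. Given $s\in comm(\beta)=comm(ac)$, the goal is $sy=ys$, which splits into commutation with each of the two additive parts of $y$. The pivotal auxiliary identity is $(bacsdbd)\alpha=\alpha(bacsdbd)$, derived from $sac=acs$ combined with the hypothesis equalities; it permits $s$ to traverse the resolvent whenever sandwiched between $bac$ and $dbd$, after which the cornerstone relation disposes of surplus $(bd)^k$ and $(ac)^k$ factors that accumulate. For the converse, I would exploit the hidden symmetry of the hypothesis under the relabeling $(a,b,c,d)\mapsto(d,c,b,a)$: the monomials $ac$ and $db$ interchange, while the four equalities in the $b$-hypothesis become the four equalities in the $c$-hypothesis of the relabeled quadruple (and vice versa). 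Starting from $1-ac\in R^d$, classical Jacobson for g-Drazin inverses yields $1-ca\in R^d$, the forward direction applied to the relabeled quadruple yields $1-db\in R^d$, and one final Jacobson gives $1-bd\in R^d$.
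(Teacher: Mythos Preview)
Your proposal is correct and is essentially the paper's own argument. The paper derives this corollary in one line from its Theorem~4.1 after observing that $bac=bdb$ and $cac=cdb$ imply the eight equalities $b(ac)^2=b(ac)(db)=b(db)(ac)=b(db)^2$ and $c(ac)^2=c(ac)(db)=c(db)(ac)=c(db)^2$; your direct three-step verification (using the cornerstone relation $p=(bd)^3p\bigl(1-p\alpha(1+bd+(bd)^2)\bigr)^{-1}$, the auxiliary commutation $(bacsdbd)\alpha=\alpha(bacsdbd)$, and the relabeling $(a,b,c,d)\mapsto(d,c,b,a)$ for the converse) reproduces precisely the proof of Theorem~4.1 itself.
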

\begin{proof} By hypothesis, we have
$$\begin{array}{c}
b(ac)^2=b(ac)(db)=b(db)(ac)=b(db)^2;\\
c(ac)^2=c(ac)(db)=c(db)(ac)=c(db)^2.
\end{array}$$ This completes the proof by Theorem 4.1.\end{proof}

\begin{cor} (~\cite[Theorem 2.2]{CS1} and \cite[Theorem 2.1]{R}) Let $R$ be a ring, and let $a,b,c\in R$ satisfying $(ac)^2a=acaba=abaca=a(ba)^2.$ Then $\alpha=1-ba\in R^{d}$ if and only if
$\beta=1-ac\in R^d$. In this case, $$\beta^{d}=\big[1-aca\alpha^{\pi}\big(1-\alpha\alpha^{\pi}(1+ba+baba)\big)^{-1}bac\big](1+ac+acac)+aca\alpha^dbac.$$\end{cor}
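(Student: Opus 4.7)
The plan is to derive Corollary~4.3 from Theorem~4.1 by a substitution that matches the corollary's single chain of identities to both rows of the theorem's hypothesis, and then to bridge the resulting equivalence to the stated one via the classical Jacobson's Lemma for g-Drazin inverses.

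First I would rewrite the hypothesis $(ac)^2a=acaba=abaca=a(ba)^2$ in the grouped form
\[
a(ca)^2=a(ca)(ba)=a(ba)(ca)=a(ba)^2.
\]
Substituting $(a,b,c,d)\mapsto(c,a,a,b)$ into Theorem~4.1, so that the new ``$ac$'' is $ca$ and the new ``$bd$'' is $ab$, the two rows of Theorem~4.1's hypothesis coincide (both the new $b$ and the new $c$ are equal to $a$) and reduce exactly to the above chain of identities. Theorem~4.1 therefore yields
\[
1-ab\in R^d\;\Longleftrightarrow\;1-ca\in R^d,
\]
together with an explicit formula for $(1-ca)^d$ in terms of $(1-ab)^d$ and $(1-ab)^\pi$.

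Next I would invoke the Jacobson's Lemma for the g-Drazin inverse of Zhuang et al.\ (\cite[Theorem~2.3]{Z3}) twice: once for the pair $(ab,ba)$ and once for the pair $(ca,ac)$. This supplies $1-ab\in R^d\Longleftrightarrow 1-ba\in R^d$ and $1-ca\in R^d\Longleftrightarrow 1-ac\in R^d$, each with an explicit formula relating the two g-Drazin inverses. Combined with the previous step, this gives the desired equivalence $\alpha=1-ba\in R^d\Longleftrightarrow\beta=1-ac\in R^d$.

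Finally, to produce the stated closed form for $\beta^d$, I would chain the three formulas: use Jacobson's formula to rewrite $(1-ab)^d$ in terms of $\alpha^d=(1-ba)^d$, substitute into Theorem~4.1's formula for $(1-ca)^d$, and apply Jacobson's formula once more to pass from $(1-ca)^d$ to $(1-ac)^d=\beta^d$. Repeated use of the hypothesis identities $acaca=acaba=abaca=ababa$ should collapse the resulting nested expression to the compact form
\[
\beta^d=\bigl[1-aca\,\alpha^\pi\bigl(1-\alpha\alpha^\pi(1+ba+baba)\bigr)^{-1}bac\bigr](1+ac+acac)+aca\,\alpha^d\,bac.
\]
This last algebraic simplification is the main obstacle; the equivalence itself is a direct consequence of the substitution in Theorem~4.1 together with Jacobson's Lemma, whereas the compact closed form requires careful bookkeeping to cancel the cross-terms generated by the two passages through Jacobson.
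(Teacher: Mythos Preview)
Your approach is correct and coincides with the paper's: the paper's proof of Corollary~4.3 is the single line ``We complete the proof by Theorem~4.1 and Jacobson's lemma,'' and the parallel Corollary~2.3 (the Cline-formula analogue) makes explicit exactly the substitution $(a,b,c,d)\mapsto(c,a,a,b)$ and the subsequent two applications of the classical lemma that you describe. Your observation that the explicit formula for $\beta^d$ requires nontrivial bookkeeping to collapse the chained Jacobson expressions is well taken---the paper simply asserts the closed form without carrying out that simplification.
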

\begin{proof} We complete the proof by Theorem 4.1 and Jacobson's lemma (see~\cite[Theorem 2.3]{LC}).\end{proof}

\begin{thm} Let $R$ be a ring, and let $a,b,c,d\in R$ satisfying
$$\begin{array}{c}
b(ac)^2=b(ac)(db)=b(db)(ac)=b(db)^2;\\
c(ac)^2=c(ac)(db)=c(db)(ac)=c(db)^2.
\end{array}$$
Then $\alpha=1-bd\in R^{D}$ if and only if $\beta=1-ac\in R^{D}$ and
$\beta^{D}=\big[1-acd\alpha^{\pi}\big(1-\alpha\alpha^{\pi}(1+bd+bdbd)\big)^{-1}bac\big](1+ac+acac)+acd\alpha^Dbac$.\end{thm}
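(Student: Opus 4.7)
The strategy is to reduce Theorem 4.4 to Theorem 4.1 in the same manner that Corollary 3.3 was reduced to Theorem 3.1: first produce the g-Drazin candidate via Theorem 4.1, then upgrade it to a Drazin inverse by proving that the resulting remainder is nilpotent. Concretely, assume $\alpha=1-bd\in R^{D}$, so $\alpha\in R^{d}$ with $\alpha^{d}=\alpha^{D}$. Theorem 4.1 then hands us $\beta=1-ac\in R^{d}$ together with the candidate
$$y=\bigl[1-acd\,\alpha^{\pi}\bigl(1-\alpha\alpha^{\pi}(1+bd+bdbd)\bigr)^{-1}bac\bigr](1+ac+acac)+acd\,\alpha^{D}bac,$$
and shows $y=\beta^{d}$. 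Since $y\in comm^{2}(\beta)$ forces $\beta y=y\beta$, upgrading $\beta^{d}$ to $\beta^{D}$ reduces to the single nilpotency check that $\beta-\beta^{2}y=\beta-\beta y\beta$ is nilpotent.

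For that check I would reuse the identity already extracted in Step~2 of the proof of Theorem 4.1, namely
$$\beta-\beta y\beta=acd\,(\alpha p)\,(bd)^{3}\,q^{2}\,bac,$$
where $p=\alpha^{\pi}$ and $q=\bigl(1-p\alpha(1+bd+bdbd)\bigr)^{-1}$. Since $\alpha\in R^{D}$, the element $\alpha p=\alpha-\alpha^{2}\alpha^{D}$ is nilpotent. The key structural observation is that $\alpha p$, $bd=1-\alpha$, and $q$ pairwise commute: $p\in comm^{2}(\alpha)$ commutes with $bd$, and $q$ inverts an element of the commutative subring generated by $\alpha$, $p$, $bd$. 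Combining this with the relation $bacacd=(bd)^{3}$, which comes from right-multiplying the hypothesis $b(ac)^{2}=b(db)^{2}$ by $d$, a short induction on $n$ gives
$$(\beta-\beta y\beta)^{n}=acd\,(\alpha p)^{n}\,(bd)^{6n-3}\,q^{2n}\,bac\qquad(n\ge 1),$$
which vanishes as soon as $n$ exceeds the nilpotency index of $\alpha p$. Hence $\beta\in R^{D}$ and $\beta^{D}=y$, matching the stated formula.

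For the converse, assume $\beta=1-ac\in R^{D}$. Classical Jacobson's lemma for the Drazin inverse (\cite{CH}) then gives $1-ca\in R^{D}$. The hypothesis system is invariant under the relabelling $(a,b,c,d)\mapsto(d,c,b,a)$, which interchanges $ac\leftrightarrow db$ and $bd\leftrightarrow ca$; applying the already established forward direction to the relabelled data yields $1-db\in R^{D}$, and one more use of Drazin Jacobson delivers $\alpha=1-bd\in R^{D}$. The main obstacle I expect is the telescoping step: one has to recognise that after sliding $bac\cdot acd$ through as $(bd)^{3}$, the three ``bulk'' factors $\alpha p$, $(bd)^{3}$, and $q^{2}$ commute freely, so that their powers collapse cleanly and the nilpotency of $\alpha p$ propagates to the whole expression. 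The rest is routine once this closed-form for $(\beta-\beta y\beta)^{n}$ is in place.
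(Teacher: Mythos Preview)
Your proposal is correct and follows essentially the same route as the paper: invoke Theorem~4.1 to obtain $\beta^{d}=y$, reuse the Step~2 identity $\beta-\beta y\beta=acd\,(\alpha p)(bd)^{3}q^{2}\,bac$, and exploit $bacacd=(bd)^{3}$ together with the nilpotency of $\alpha p=\alpha-\alpha^{2}\alpha^{D}$ to conclude $\beta\beta^{\pi}\in R^{nil}$; the converse is handled by the same symmetry-plus-Jacobson argument. The only cosmetic difference is that the paper packages the nilpotency step via the ``$AB$ nilpotent $\Leftrightarrow BA$ nilpotent'' trick (citing~\cite[Lemma~2.2]{L}) rather than your explicit inductive formula for $(\beta-\beta y\beta)^{n}$, but the underlying computation is identical.
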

\begin{proof} $\Longrightarrow $ In view of Theorem 4.1, $\beta=1-bd\in R^{d}$ and
$\beta^{d}=\big[1-acd\alpha^{\pi}\big(1-\alpha\alpha^{\pi}(1+bd+bdbd)\big)^{-1}bac\big](1+ac+acac)+acd\alpha^Dbac$. It will suffice to prove
$\beta\beta^{\pi}\in R^{nil}$. Let $p=\alpha^{\pi}$. As in the proof of Theorem 4.1, we have
$$\begin{array}{lll}
\beta\beta^{\pi}&=&acd\alpha bdbdbdp[1-p\alpha(1+bd+bdbd)]^{-2}bac\\
\end{array}$$ It is easy to check that
$$\begin{array}{ll}
&bacacd\alpha bdbdbdp[1-p\alpha(1+bd+bdbd)]^{-2}\\
=&bdbdbd\alpha bdbdbdp[1-p\alpha(1+bd+bdbd)]^{-2}\\
=&(bd)^6[1-p\alpha(1+bd+bdbd)]^{-2}(\alpha-\alpha^2\alpha^D)\\
\in &R^{nil}.
\end{array}$$ In view of \cite[Lemma 2.2]{L}, $\beta\beta^{\pi}\in R^{nil}$. Therefore $\beta\in R^D$ and $\beta^D=\beta^d$, as desired.

$\Longleftarrow$ This is obvious by the symmetry and Jacobson's lemma for Drazin inverse.\end{proof}

\begin{cor} Let $R$ be a ring, and let $a,b,c,d\in R$ satisfying
$$\begin{array}{c}
b(ac)^2=b(ac)(db)=b(db)(ac)=b(db)^2;\\
c(ac)^2=c(ac)(db)=c(db)(ac)=c(db)^2.
\end{array}$$
Then $\beta=1-ac\in R^{\#}$ if and only if $\alpha=1-bd\in R^{\#}$ and
$\beta^{\#}=\big[1-acd\alpha^{\pi}bac\big](1+ac+acac)+acd\alpha^{\#}bac$.\end{cor}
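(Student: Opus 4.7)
The plan is to deduce the statement from Theorem 4.4 via the standard characterization: $a\in R^{\#}$ if and only if $a\in R^{D}$ and $aa^{\pi}=0$, in which case $a^{\#}=a^{D}$.

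For the forward direction, suppose $\alpha=1-bd\in R^{\#}$. In particular $\alpha\in R^{D}$, so Theorem 4.4 immediately yields $\beta=1-ac\in R^{D}$ together with the explicit formula
$$\beta^{D}=\bigl[1-acd\alpha^{\pi}\bigl(1-\alpha\alpha^{\pi}(1+bd+bdbd)\bigr)^{-1}bac\bigr](1+ac+acac)+acd\alpha^{D}bac.$$
Since $\alpha\alpha^{\pi}=0$, the factor $\bigl(1-\alpha\alpha^{\pi}(1+bd+bdbd)\bigr)^{-1}$ collapses to $1$, and $\alpha^{D}=\alpha^{\#}$, so this formula already reduces to the stated expression for $\beta^{\#}$.

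It remains to verify $\beta\beta^{\pi}=0$, which is the only real computational content. Setting $y=\beta^{D}$ and $p=\alpha^{\pi}$, the intermediate calculation in Step~2 of the proof of Theorem 4.1 yields
$$\beta\beta^{\pi}=\beta-\beta y\beta=acd\,\alpha\,(bd)^{3}\,p\,\bigl[1-p\alpha(1+bd+bdbd)\bigr]^{-2}\,bac.$$
Now $bd$ commutes with $\alpha=1-bd$, hence with $\alpha^{\pi}\in comm^{2}(\alpha)$. Consequently $\alpha(bd)^{3}\alpha^{\pi}=(bd)^{3}\alpha\alpha^{\pi}=0$, so the whole right-hand side vanishes. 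Therefore $\beta\in R^{\#}$ with $\beta^{\#}=\beta^{D}$ equal to the formula asserted.

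For the converse I would apply the forward direction to the swapped quadruple $(d,c,b,a)$, which preserves the hypothesis (it merely permutes the four equalities within each of the two lines) while sending $\alpha$ to $1-ca$ and $\beta$ to $1-db$, and then invoke the classical Jacobson's lemma for the group inverse, $1-xy\in R^{\#}$ iff $1-yx\in R^{\#}$, exactly mirroring the backward implication of Theorem 4.4. The only step carrying real content is the commutativity argument that forces $\alpha(bd)^{3}\alpha^{\pi}=0$; the remainder is routine unpacking of Theorem 4.4, so I do not anticipate any serious obstacle.
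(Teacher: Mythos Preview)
Your argument is correct and matches the paper's own proof essentially line for line: the paper likewise invokes Theorem 4.4, simplifies the formula using $\alpha\alpha^{\pi}=0$, and then shows $\beta\beta^{\pi}=acd\,\alpha(bd)^{3}\alpha^{\pi}[1-p\alpha(1+bd+bdbd)]^{-2}bac=0$ by the same commutativity observation, handling the converse by symmetry. Your write-up in fact supplies a bit more justification for the vanishing step and spells out the symmetry more carefully than the paper does.
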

\begin{proof}  $\Longrightarrow $ Since $\beta\in R^{\#}$, we see that $\alpha \alpha^{\#}=0$. In view of Theorem 4,4, we have
$\beta^{D}=\big[1-acd\alpha^{\pi}bac\big](1+ac+acac)+acd\alpha^{\#}bac$.
As in the proof of Theorem 4.4, we get $$\beta\beta^{\pi}=acd\alpha (bd)^3\alpha^{\pi}[1-p\alpha(1+bd+bdbd)]^{-2}bac=0,$$ and therefore
$\beta^{\#}=\beta^D$, as required.

$\Longleftarrow $ This is symmetric.\end{proof}

\begin{rem} \end{rem} Many authors investigated the common spectral properties for bounded linear operators over Banach spaces (see~\cite{R2,YZ2,Z3,Z}). For bounded linear operators $A,B,C$ and $D$, we note that $I-AC$ (resp. $AC$) and $I-BD$ (resp. $BD$) share the common spectral properties under our preceding conditions.

\vskip10mm

\end{document}